\documentclass[11pt,A4paper]{amsart}
\usepackage{verbatim, amscd, epsfig,amsmath,amsthm,amstext,amssymb,hyperref,tikz
}
\usepackage{amssymb, booktabs, longtable}
\usepackage[margin=3.8cm]{geometry}

\theoremstyle{plain}
\newtheorem*{theorem*}{Theorem}
\newtheorem{theorem}{Theorem}[section]

\newtheorem{cor}[theorem]{Corollary}
\newtheorem{corollary}[theorem]{Corollary}

\theoremstyle{definition}

\newtheorem{rem}[theorem]{Remark}

\newcommand{\R}{\mathbb{ R}}
\newcommand{\C}{\mathbb{ C}}
\newcommand{\Z}{\mathbb{ Z}}

\DeclareMathOperator{\tr}{tr}

\newcommand{\g}{\mathfrak{g}}

\renewcommand{\t}{\mathfrak{t}}
\newcommand{\mf}{\mathfrak}
\newcommand{\bz}{{\bar{z}}}

\newcommand{\Ad}{\operatorname{Ad}}
\newcommand{\ad}{\operatorname{ad}}

\newcommand{\diag}{\operatorname{diag}}
\newcommand{\p}{\mathfrak{p}}
\newcommand{\Tor}{\C/\Lambda}

\setlength{\parskip}{5pt}
\setlength{\parindent}{0pt}
\newcommand{\Fl}{\mathrm{Fl}}

\begin{document}
\title
{Harmonic tori in De Sitter spaces $S^{2n}_1$ }
\author{Emma Carberry, Katharine Turner}
\date{\today}
\maketitle

\begin{abstract}
We show that all
 superconformal harmonic immersions from  genus one surfaces into   de Sitter spaces $ S ^ {2n}_1 $  with globally defined harmonic sequence are of finite-type and hence result merely from solving a pair of ordinary differential equations. As an application, we prove that all Willmore tori in $ S ^ 3 $ without umbilic points can be constructed in this simple way.
\end{abstract}

\section{Introduction}\label{introduction}
 De Sitter spaces $ S ^ m_1 $ are the unit spheres in Minkowski space $\R ^ {m, 1} $ and are a fundamental example of a non-compact pseudo-Riemannian symmetric space. 
We study harmonic maps of the complex plane into de Sitter spaces $ S ^ {2n}_1 $ which have the additional pleasant property of being superconformal. This is a natural orthogonality property of derivatives of the map and  characterises those harmonic maps which correspond to solutions of the $ SO (2n, 1) $ affine Toda field equations. We prove that all superconformal harmonic maps from a genus one surface into $ S ^ {2n}_1 $ whose harmonic sequence is everywhere defined are of finite type. This means that they result from integrating a pair of commuting vector fields on a finite dimensional Lie algebra and hence can be constructed far more easily than solving the Euler-Lagrange equations.
 Willmore tori in $ S ^ 3 $ without umbilic points may be studied via their conformal Gauss map, which is a harmonic map into $ S ^ 4_1 $ and we obtain that all such Willmore tori are finite type.

Harmonic immersions of surfaces into {\em compact} Lie groups and symmetric spaces has been well-studied using integrable systems techniques. In particular, a series of papers addressed the question of whether harmonic immersions of genus one surfaces into various compact symmetric spaces are all of finite type  \cite {Hitchin:90, PS:89, Bobenko:91, BPW:95, FPPS:92}, culminating in the general results of \cite{BFPP:93} giving the mild additional assumptions necessary for harmonic maps of  tori   into a compact symmetric space to be of finite type. In the recent work  \cite{CT:11} we considered harmonic immersions into homogeneous spaces $ G/T $ where $ G $ is a simple Lie group (not necessarily compact) and $ T $ a Cartan subgroup. We showed that an immersion of a 2-torus into $ G/T $ which possesses a Toda frame is necessarily of finite type and characterised those $ k $-symmetric spaces $ G/T $ to which this theory applies in terms of extended Dynkin diagrams. In this manuscript we show that superconformal immersions of a surface into $ S ^ {2n}_1 $ whose harmonic sequence is everywhere defined possess a lift into $ SO (2n, 1)/T $ which has a Toda frame and hence the theory of \cite{CT:11} can be applied. This work is both the analogous to and a generalisation of the methods of \cite{BPW:95}, in which the compact case was addressed including the study of superconformal maps into Euclidean spheres.

De Sitter spaces are a particularly natural example to study as they are of interest both in mathematics and in  physics. For example, harmonic maps of Euclidean surface into de Sitter spaces arise naturally in solid-state physics, where they describe localisation properties in disordered conductors and superconductors  \cite {AP:88, Efetov:83, Oppermann:87}.
This finite type result is a crucial step in providing a spectral curve construction for these harmonic tori, such as have proven most helpful in the study of harmonic maps of tori into various compact symmetric spaces  \cite {Hitchin:90,PS:89, McIntosh:95, McIntosh:96, KSS:10, Haskins:04}. 

Harmonic maps of surfaces into spheres (Euclidean or Lorentzian) may be classified by their isotropy order. This is the dimension of the maximal isotropic subspace of the complexified tangent space spanned by successive $ z $-derivatives of the map. The isotropic maps are those with maximal isotropy order. These are all obtained from projections of holomorphic horizontal maps into the appropriate twistor space
\cite {Calabi:67, Bryant:82, Bryant:85, Bryant:84, Ejiri:88},
and so we concentrate on the study of the remaining, non-isotropic, harmonic maps. 
\emph{Superconformal} harmonic maps are those with the penultimate isotropy order.
The isotropy order of a harmonic map $ f $ from a surface into $ S ^ {2n}_1 $ can be measured by the length of its harmonic  sequence, which is obtained by applying the Gram-Schmidt procedure to successive $ z $-derivatives of  $ f $. At points  where an element in the harmonic sequence is nulllike but non-vanishing, the sequence cannot be further defined. Fortunately  \cite {Hulett:05} 
this can only occur at isolated points. We prove that a harmonic map $ f $ has a cyclic primitive lift to the full isotropic flag manifold $ SO (2n +1)/T $ if and only if it is superconformal and its harmonic sequence is defined everywhere. We show that superconformal harmonic maps with everywhere defined harmonic sequence precisely correspond to solutions of the affine Toda field equation for $ SO (2n , 1) $. This is  in natural analogy with the Euclidean case, although with the twist that we compute with respect to a general cyclic element of the Lie algebra, so our correspondence is with solutions to a natural generalisation of the classical affine Toda equations.  Using this correspondence, we apply the techniques of our previous work \cite {CT:11} to prove that all superconformal genus one surfaces  in $ S ^ {2n}_1 $ whose harmonic sequence is everywhere defined are of finite type.

We end with an application of our results to  Willmore tori in $ S ^ 3 $. An immersed surface in $\R ^ 3 $ is  \emph{Willmore} when it is critical for the Willmore functional $\int H ^ 2 dA $, where $ H $ denotes the mean curvature. Since this functional is conformally invariant, it is equivalent to study Willmore surfaces in $ S ^ 3 $. The famous Willmore conjecture \cite {Willmore:65} proposes that for genus one surfaces the minimum value of the Willmore functional is $ 2\pi ^ 2 $ and is achieved only for the Clifford torus. The conformal Gauss map of a surface in $ S ^ 3 $ is defined away from umbillic points and is a map into $ S ^ 4_1 $. The conformal Gauss map is harmonic precisely when the original immersion is Willmore. Hence our results apply to give a construction of all Willmore tori in $ S ^ 3 $ without umbillic points from a pair of ordinary differential equations on a finite dimensional Lie algebra. That such surfaces are of finite type was  shown in \cite{Bohle:08} without the exclusion of umbilic points, using methods and quaternionic holomorphic geometry, but for this case our methods give a much simpler proof.

Our results raise a number of interesting questions for further research. This analysis certainly motivates the question of finding conditions under which the harmonic sequence is everywhere defined. It also naturally prompts generalisation. We have shown what modifications of the Euclidean methods were needed to tackle the case of superconformal harmonic surfaces in $ S ^ {2n}_1 $ and more generally, the affine Toda equations for a  Lie group which is not necessarily compact. One would expect that  harmonic maps into $ S ^ {2n}_1 $ with lower isotropy orders whose harmonic sequences are everywhere defined have primitive lifts into partial flag manifolds and that using this lift the genus one such surfaces can be shown to be of finite type. This would be the natural extension of the results of Burstall  \cite {Burstall:95} in the Euclidean situation. In another direction, this finite type result provides a crucial ingredient for the construction of a spectral curve correspondence for superconformal harmonic tori in de Sitter spaces, as has been achieved for harmonic genus one surfaces in a number of compact target spaces \cite {Hitchin:90, PS:89, FPPS:92, McIntosh:95, McIntosh:96, McIntosh:02, MR:10}. 
The importance of a spectral curve correspondence is that it gives a purely algebro-geometric description of the harmonic map and hence yields appropriate tools for studying the moduli space along with a natural integer invariant, the spectral genus. Spectral curve correspondences have been used in \cite {EKT:93, Jaggy:94, Carberry:04, CM:03, CS:12} 
 to construct families of harmonic maps of arbitrarily high dimension. For superconformal harmonic tori in $ S ^ {2n}_1 $, the case $ n = 2 $ is of particular interest since it corresponds to Willmore tori in $ S ^ 3 $. A generalisation of the spectral curve constructed using integrable systems has been utilised to achieve exciting progress towards this conjecture \cite{Schmidt:02}. One would expect that as in the constant mean curvature case  \cite {CLP:09} 
the integrable systems spectral curve should be a partial normalisation of the one utilised in  \cite {Schmidt:02, Bohle:08} and should be able to be interpreted as a  subspace of the  space of Darboux transforms arising from the loop of flat connections. 

In section 2 we consider harmonic immersions of the plane into de Sitter spheres $ S ^ {2n}_1 $ and show that they have a primitive lift into the full isotropic flag manifold $ SO (2n, 1)/T $ precisely when they are superconformal and have an everywhere defined harmonic sequence. In section 3 we use this result and the results of \cite {CT:11} to prove that superconformal harmonic maps of tori with globally defined harmonic sequences are finite type. Section 4 contains the application of these results to Willmore surfaces.

\section{Superconformal maps of the complex plane to de Sitter spaces $ S ^ {2n}_1 $}\label{primitive}
%

We will consider harmonic maps from the complex plane into de Sitter spaces $ S ^ {2n}_1 $ and derive necessary and sufficient conditions for such a map to have a cyclic primitive lift into the isotropic flag manifold $ SO (2n, 1)/T $. Such maps correspond to solutions of the affine Toda equations for $\mathfrak {so} (2n, 1) $. 
We will later show that all doubly periodic cyclic primitive maps of the plane into $ SO (2n, 1)/T $ are of finite type which allows us to recover the corresponding harmonic map by integrating a pair of commuting vector fields on a finite dimensional loop algebra.


Let $\R ^ {2n, 1} $ denote $\R ^ {2n +1} $ with the Minkowski inner product
\[
 x ^ 1y ^ 1+ x ^ 2y ^ 2+\cdots + x ^ {2n}y ^ {2n} - x ^ {2n +1} y ^ {2n +1} = x ^ t\upsilon y ,
\] where $\upsilon =\diag (1,\ldots, 1, -1) $. 
The unit sphere $ S ^ {2n}_1\subset\R^ {{2n}, 1} $ with respect to the Minkowski metric is called ($ {2n} $-dimensional) de Sitter space and in particular the case $ n = 2 $ plays an important role in general relativity. De Sitter  space is acted upon transitively by the de Sitter group
$ G = SO ({2n}, 1) $ of orientation preserving isometries of $\R ^ {{2n}, 1} $, and the identity component of the stabiliser of $ (1, 0,\ldots , 0) $ is 
$ H =\diag\left (1, SO ({2n} -1, 1)\right) $. Thus we may view $ S ^ {2n}_1 $ as a homogeneous space $ SO ({2n}, 1)/H $ and it is furthermore a symmetric space with respect to the involution $\tau$ 
of $ SO ({2n}, 1) $ consisting of conjugation by $ h =\diag (1, -1,\ldots , -1) $. Explicitly, $ SO (2n, 1) $ is the subgroup of $ GL (2n +1,\R) $ given by the condition $ g ^ t\upsilon g =\upsilon $ and we observe that the Lorentzian metric on $ S ^ {2n}_1 $ inherited from $\R ^ {2n, 1} $ is given by the scalar multiple $ \frac 12\tr (\ad_X\ad_Y) $ of the Killing form on $ SO (2n, 1) $. 

The  \emph{isotropy order} of a harmonic map $f $ of a surface $ M ^ 2$ into $ S ^{2n}_1 $ is the integer $ r\geq 0 $ such that
\begin{align*}
\langle \partial_z^{a} f, \partial_z^{b} f\rangle &\equiv 0, \qquad\qquad \text{for } 1\leq a+ b \leq 2r +1, a,b\geq 0\\
\langle \partial_z^{r +1} f, \partial_z^{r +1} f\rangle &\not\equiv 0.
\end{align*}
 Here $\langle\cdot  ,\cdot \rangle$ denotes the complex bilinear product on $\C^{2n+1}_1$ and we will write $\|v\|^2 = \langle v, \bar{v}\rangle$.

A harmonic map having the maximal isotropy order $ r = n $ is variously termed  isotropic, superminimal or pseudo-holomorphic. Isotropic surfaces in $ S ^ {2n}_1 $ include all harmonic maps of $ S ^ 2 $, and can be expressed holomorphically in terms of a Weierstrass-type representation \cite {Calabi:67, Bryant:82, Bryant:85, Bryant:84, Ejiri:88}.  A harmonic map of isotropy order $ r\geq 1 $ is weakly conformal and hence space-like. A \emph{superconformal}  $f:\C\rightarrow S ^ {2n}_1 $ is one with the penultimate isotropy order $ r = n -1 $ and as
we shall see, superconformal maps have an interesting connection with the affine Toda field equations for $ SO (2n, 1) $. For $ n = 1 $ as well as the physically interesting case $ n = 2 $, harmonic maps into $ S ^ {2n}_1 $ must be either isotropic or superconformal.

Applying the Gram-Schmidt orthogonalisation process to successive $z$-derivatives of $f$, we inductively define the harmonic sequence $\{f_0,f_1, \ldots, f_{r}\}$ of a non-constant harmonic map $ f:M ^ 2\rightarrow S ^ {2n}_1 $  by
\begin{equation}\label{harm}
f_0 = f, \qquad
f_{j+1} = \partial_z f_j - \frac{\langle \partial_z f_j, \overline{f_j} \rangle}{\|f_j\|^2}f_j\mbox{ wherever $\|f_j\|^2\neq 0$}
\end{equation}
 and extend by continuity wherever $f_j=0$.
This sequence for maps into Lorentzian space has been studied in detail in  \cite{Hulett:05}, to which we refer the reader for further exposition (note that $\langle\cdot  ,\cdot  \rangle$ in  \cite{Hulett:05} denotes the Hermitian product rather than the complex bilinear product). It was shown in \cite [Lemma 3.1] {Hulett:05}  that the $f_j$ are defined on an open dense subset of $\C$,   satisfy
\begin{align*}
\partial_{\bar {z}} f_{j+1} = -\frac{\|f_{j+1}\|^2}{\|f_j \|^2} f_j & \quad \text{ for } 0\leq j \leq r \\
\langle f_j, \overline{f_k}\rangle = 0 & \quad \text{ unless } j=k
\end{align*}
and that the zeros of the $ f_j $ are isolated  whenever $ f_j $ does not vanish identically.
Note that for $0\leq j,k \leq r$ we have $ \langle f_j, \partial_z f_k\rangle =0$ and also
\begin {align*}
\langle  f_j,f_k\rangle & =0 \quad\text { unless } \quad0=j=k,\\
 \langle \bar{f_j}, \partial_z \bar{f_k}\rangle & =0 \quad\text { unless } \quad0=j=k-1.
\end {align*}


In Theorem ~\ref{thm:cyclicprimitive} we will show that a superconformal harmonic map whose harmonic sequence is everywhere defined possesses a cyclic primitive lift into the isotropic flag manifold
\[
\Fl (S ^ {2n}_1) =\{V_1\subset V_2\subset\cdots \subset V_{n -1}\subset T ^\C S ^ {2n}_1\mid V_j\mbox { is an isotropic sub-bundle of dimension $ j $}\}.
\]
We say that a sub-bundle is isotropic if its fibres are each isotropic as subspaces of $\C ^ {2n}_1 $.


Denote by $ T $ the Cartan subgroup $\diag \{1, SO(2), SO(2), \ldots SO(1,1))\}$ of $ SO ({2n}, 1)$ and write $\t $ for the corresponding Lie algebra and $\t ^\C $ for its complexification. 
Geometrically, the homogeneous space $ SO (2n, 1)/T $ is the full isotropic flag manifold
\[
\Fl (S ^ {2n}_1)
 =\{V_1\subset V_2\subset\cdots \subset V_{n -1}\subset T ^\C S ^ {2n}_1\mid V_j\mbox { is an isotropic sub-bundle of dimension $ j $}\}
\]
where we say that a sub-bundle is isotropic if its fibres are each isotropic with respect to the complex bilinear form $\langle\cdot,\cdot \rangle $.

 To define what it means for a map into $ SO (2n, 1)/T\cong \Fl (S ^ {2n}_1) $ to be cyclic primitive we shall exhibit the isotropic flag manifold as an $ n $-symmetric space and it will be advantageous to express this structure in terms of the standard choice of simple roots for $\mathfrak {so} (2n, 1,\C) $, which we now recall.

Define $ \tilde {a}_k\in\mathfrak t ^*$, $ k = 1, \ldots , n $ by
\[
\tilde {a}_k\left (\diag\left \{0,
\left (
\begin {array} {cc}
0 & a_1\\
- a_1 & 0\end {array}\right),
 \ldots
\left (
\begin {array} {cc}
0 & a_n\\
a_n & 0\end {array}\right)\right\}\right) =a_k.
\]
We choose as simple roots of $ \mathfrak {so} (2n, 1,\C) $ the roots $\alpha_1 = i\tilde {a}_1$, $\alpha_k = i\tilde {a}_k-i\tilde {a}_{k-1}$ for $1< k < n$, and $\alpha_n= \tilde {a}_n - i\tilde {a}_{n-1}$\label {page:simple}. The lowest root is then
\[
 \alpha_0: =- 2\alpha_1 - 2 \alpha_2 -\ldots -2\alpha_{n-1} - \alpha_n  = -\tilde {a}_n - i\tilde {a}_{n-1}, 
\]
which is of height $-2n+1$.

For ease of notation, when $ j<k$ we define
\[
[j,k] :=
\begin{cases}
 E_{kj} - E_{jk} & \text{ if } k\neq 2n+1\\
 E_{kj} + E_{jk} & \text{ if } k=2n+1.
\end{cases}
\]
where  $E_{jk}$ is the $(2n+1)\times(2n+1)$ elementary matrix with $1$ in the $(j,k)$ entry and zeroes elsewhere.
Then the corresponding dual basis of $ (\mathfrak{t} ^\C) ^*$ with respect to the Killing form is
\begin{equation}\label {eq:dual}
\eta_j= i\sum_{l=j}^{n-1} [2l,2l+1]  - [2n, 2n+1]\;\text {for $ 1\leq j\leq n -1 $ and }
\eta_n= - [2n,2n+1].
\end{equation}


A homogeneous space $ G/K $ is a {\it $ k $-symmetric space} ($ k >1 $) if there is an automorphism $\sigma: G\rightarrow G $ of order $ k $ such that
\[
(G ^\sigma)_0\subset K\subset G ^\sigma
\] where $ G ^\sigma $ denotes the fixed point set of $\sigma $, and $ (G ^\sigma)_0 $ the identity component of $ G ^\sigma $. When $ k = 2 $, we recover the notion of a symmetric space.

Consider the automorphism $\sigma $ of $ SO (2n, 1,\C) $ consisting of conjugation by
\[
 \exp \Bigl( \frac {\pi i } {n}\sum_{j = 1} ^ {n} \eta_j\Bigr) 
=\diag\left (1, R_{\left (\frac {\pi} {n}\right)}, R_{\left (\frac {2\pi} {n}\right)},\ldots, R_{\left (\frac {(n -1)\pi} {n}\right)}, - I_ {2 }\right),
\]
where $ R_\theta $ denotes rotation of the plane anticlockwise through angle $\theta $.

Denote also by $\sigma $ the corresponding Lie algebra automorphism
\[
\Ad_{ \exp ( \frac {\pi i } {n}\sum_{j = 1} ^ {n} \eta_j)}
\]
which we recognise as the corresponding Coxeter automorphism associated to our choice of simple roots.
Observe that if $ R_\alpha $ is a root vector for $\alpha =\sum_{j = 1} ^ n m_j\alpha_j $, we have
\[
\sigma (R_\alpha) = e ^ {\frac {\pi i} {n}\sum_{j = 1} ^ mn_j} R_{\alpha}.
\]
For $ j <n $ the simple roots satisfy $\bar\alpha_j = -\alpha_j $ and $\bar\alpha_n = -\alpha_0 $, so the automorphism $\sigma$ preserves the real form $\mathfrak {so} (2n, 1) $ and  the group $ SO (2n, 1) $. Alternatively, this follows directly from  \cite [Proposition 3.1 and Theorem 3.2] {CT:11} which for $\g ^\C$ any simple complex Lie algebra characterised all Cartan involutions of a real form $\g $ such that the corresponding Coxeter automorphism preserves $\g $.

The identity component of the fixed point set of the group automorphism $\sigma $ is $ T $
 and clearly $\sigma  $ has order $ n $, and so it exhibits $ SO (2n, 1)/T $ as an $ n $-symmetric space. 

For any $ k $-symmetric space $ (G/K,\sigma) $ the  induced automorphism $\sigma $ of $\g $ gives a $\Z_k $-grading
\[
\g ^\C =\bigoplus_{j = 0} ^ {k -1}\g ^\sigma_j,\quad [\g ^\sigma_j,\g ^\sigma_l ]\subset\g ^\sigma_{j + l},
\]
 where $\g ^\sigma_j $ denotes the $ e ^ {j\frac {2\pi i} {k} } $-eigenspace of $\sigma $. We have the reductive  splitting
\[
\g =\mathfrak{k}\oplus\p
\]
with
\[
\p ^\C =\bigoplus_{j = 1} ^ {k -1}\g_j ^\sigma,\qquad \mathfrak {k} ^\C =\g_0 ^\sigma,
\]
and if $\varphi $ is a $\g $-valued form we may decompose it as $\varphi =\varphi_{\mathfrak k} +\varphi_{\mathfrak p} $.

A smooth map $ f $ of a surface into a symmetric space $ (G/K,\sigma) $ is harmonic if and only if for some (and hence any) smooth lift $ F: U\rightarrow G $ of $ f: U\rightarrow G/K $, the form $\varphi = F ^{-1}dF $ has the property that for each $\lambda\in S ^ 1 $
\[
\varphi_\lambda =\lambda \varphi '_\p +\varphi_{\mathfrak{k}} +\lambda^ {- 1}\varphi''_\p
\]
satisfies the Maurer-Cartan equation
\[
d\varphi_\lambda +\frac 12 [\varphi_\lambda\wedge\varphi_\lambda ] = 0.
\]

When $ k >2 $, the analogous condition characterises those smooth maps $ \psi$ of a surface  into $ G/K $  such that
for some (and hence any) smooth lift $ F: U\rightarrow G $ of $ \psi: U\rightarrow G/K $,
$\varphi ' = F ^ {- 1}\partial F $ takes values in $\g_0 ^\sigma\oplus\g _1^\sigma $. These maps are termed {\em primitive} and form a subclass of the harmonic maps. (Proofs of these statements are contained for example in \cite {CT:11}.)

The lifts $\psi:\C\rightarrow SO (2n, 1)/T $ that we shall construct for superconformal harmonic $ f:\C\rightarrow S ^ {2n}_1 $ will in fact be cyclic primitive, which is needed to make contact with the affine Toda field equations and  serves to distinguish superconformal maps from the isotropic ones. A smooth map $ \psi:\C\rightarrow SO (2n, 1)/T $ is  \emph{cyclic primitive} if it is primitive and satisfies the  condition that the image of $ F^ {- 1}\partial F $ contains a cyclic element, where as before $F:\C\rightarrow SO (2n, 1) $ is a smooth lift of $\psi $.
Writing $\alpha_0 $ for the lowest root and $  \mathcal G ^\alpha $ for the root space of root $\alpha $, an element in $\left (\bigoplus_{j = 0} ^ n \mathcal G ^{\alpha_j}\right) $ is  \emph{cyclic} if its projection to each of the root spaces $  \mathcal G ^{\alpha_0},  \mathcal G ^{\alpha_1},\ldots ,  \mathcal G ^{\alpha _n} $ is non-zero. 

The observant reader may have noted that we defined primitive earlier only for maps into $ k $-symmetric spaces with $ k >2 $ as for $ k = 2 $ the primitive condition is vacuous. In order that maps into $ S ^ 2_1 $ not be excluded from our consideration, we will take primitive in this case to mean harmonic and cyclic primitive to further mean that the image of $ F^ {- 1}\partial F$ contains a semisimple element.

A map $f:\C\to S ^ {2n}_1 $ has a  primitive lift $\psi:\C\to SO (2n, 1)/T $ if and only if it has a frame $F:\C\rightarrow SO (2n, 1) $ such that $ F ^ {- 1} F_z $ is valued in $\mf{t}^\C\oplus \left (\bigoplus _{j = 0} ^ {n} \mathcal G ^{\alpha_j}\right) $. We call such a frame $ F $ a  \emph{primitive frame} of $ f $. Note that if $ f $ has a primitive lift $\psi $ then all frames $ F $ of $\psi $ will be primitive frames of $ f $. We now explicitly characterise the primitive frames corresponding to a natural choice of simple roots for $\mathfrak {so} (2n, 1,\C) $.

A root space decomposition of $ SO (2n, 1) $ is given in the table below, where throughout the table we assume $1\leq j<k<n$.
\begin{center}
Root  space  decomposition of $SO(2n,1)$
\begin{tabular}{c||c}
\hline
$\alpha \in \mathfrak{t}^*$ & $R_\alpha \in \mathfrak{g}/\mathfrak{t}$\\
\hline
$i \tilde {a}_k$ & $[1,2k]+i[1,2k+1]$\\
$-i\tilde {a}_k$ & $[1,2k]-i[1,2k+1]$\\
$\tilde {a}_n$ & $[1,2n]+[1,2n+1]$\\
$-\tilde {a}_n$ & $[1,2n]-[1,2n+1]$\\
$i\tilde {a}_j+i\tilde {a}_k$ & $[2j,2k] + i[2j,2k+1] +i[2j+1,2k] -[2j+1,2k+1]$\\
$i\tilde {a}_j-i\tilde {a}_k$ & $[2j,2k] - i[2j,2k+1] +i[2j+1,2k] +[2j+1,2k+1]$\\
$-i\tilde {a}_j+i\tilde {a}_k$ & $[2j,2k] + i[2j,2k+1] -i[2j+1,2k] +[2j+1,2k+1]$\\
$-i\tilde {a}_j-i\tilde {a}_k$ & $[2j,2k] - i[2j,2k+1] -i[2j+1,2k] -[2j+1,2k+1]$\\

$i\tilde {a}_j+\tilde {a}_n$ & $[2j,2n] - [2j,2n+1] +i[2j+1,2n] -i[2j+1,2n+1]$\\

$i\tilde {a}_j-\tilde {a}_n$ & $[2j,2n] + [2j,2n+1] +i[2j+1,2n] +i[2j+1,2n+1]$\\
$-i\tilde {a}_j+\tilde {a}_n$ & $[2j,2n] - [2j,2n+1] -i[2j+1,2n] +i[2j+1,2n+1]$\\
$-i\tilde {a}_j-\tilde {a}_n$ & $[2j,2n] + [2j,2n+1] -i[2j+1,2n] -i[2j+1,2n+1]$\\
\hline
\end{tabular} \end{center}

Primitive frames $ F $ are those for which $ F ^ {- 1} F_z $ is of the form
\begin{equation}\label{eq:matrix}
\tiny
{\left(
\begin{array}{c|cc|cc|cc|cc|cc}
 & -c_1 & -ic_1 & \ & \ & \ &\ & \ & \ \\
\hline
c_1  & 0    &    a_1 & -c_2 & -ic_2   &    &  &  & \\
ic_1  & - a_1    &    0 & ic_2 & -c_2   &    &   &  &\\
\hline
& c_2& -ic_2 & 0 & a_2 & -c_3& -ic_3 & & & & \\
& ic_2&c_2 & - a_2 & 0 & ic_3& -c_3 & & & & \\
\hline
  & & & c_3  & -ic_3  & 0  & a_3 & \ddots  &   &   \\
 & &  & ic_3  &  c_3 & - a_3  & 0 &  &  \ddots & \\
\hline
 & & & & & \ddots &  & \ddots  &   & -c_n -c_0  & -c_n + c_0   \\
 & &  & & & &\ddots   &   &\ddots &  ic_n +ic_0 & ic_n -ic_0\\
\hline
  &  &  & & & & &  c_n + c_0 &  -ic_n -ic_0  &  0  &    a_n\\
  &  & & & & &  &  -c_n + c_0 & ic_n -ic_0  &    a_n & 0
\end{array}
\right)}.
\end{equation}

\begin{theorem}\label {thm:cyclicprimitive}
A harmonic map $f:\C \to S^{2n}_1$  has a cyclic primitive lift $\psi:\C\rightarrow SO (2n, 1)/T $ if and only if it is superconformal and the entries $\{f_1, \ldots, f_{n}\}$ of its harmonic sequence are defined everywhere. 
 Furthermore, under these conditions, for each  $ 1\leq j\leq n -1 $ the function $ f_j $ is spacelike away from its (discrete) vanishing set and we have
$$f_j= 2^{j-1}c_1\ldots c_j F(e_{2j} + ie_{2j+1})\text { for each } 1\leq j\leq n-1
$$
and
$$\langle \partial_z^n f, \partial_z^n f\rangle=2^{2n-2}c_1^2c_2^2 \ldots c_{n-1}^2 c_n c_0 $$
for any cyclic primitive frame $F:\C \to SO(2n,1)$ of $f$. Here the functions $ c_j $ are the root vector coefficients defined in  \eqref {eq:matrix}, that is
the $\g_1 ^\sigma $-component of $ F^{-1}F_z $ is $ \sum_{k = 0} ^ n c_kR_{\alpha_k} $.
\end{theorem}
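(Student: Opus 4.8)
The plan is to prove the two implications separately; the displayed formulae drop out of the ``only if'' direction, which is a computation with \eqref{eq:matrix}, while the ``if'' direction requires constructing the frame and is where the work lies.

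Suppose first that $f$ has a cyclic primitive lift, so locally it has a primitive frame $F$ with $A:=F^{-1}F_z$ of the form \eqref{eq:matrix} and every $c_k$ not identically zero (by cyclicity), with isolated zeros. Since $T\subset H$, $F$ also lifts $f$, so $f=Fe_1$. Because $\langle f,f\rangle\equiv 1$ the coefficient $\mu_0=\langle\partial_z f,\overline f\rangle/\|f\|^2$ vanishes, so $f_1=\partial_z f=FAe_1=c_1F(e_2+ie_3)$. I would then induct: assuming $f_j=2^{j-1}c_1\cdots c_j\,F(e_{2j}+ie_{2j+1})$, reading off columns $2j,2j+1$ of \eqref{eq:matrix} gives, for $1\le j\le n-2$,
\[
A(e_{2j}+ie_{2j+1})=ia_j(e_{2j}+ie_{2j+1})+2c_{j+1}(e_{2j+2}+ie_{2j+3}),
\]
the ``backward'' entries cancelling in this combination, the $\t^\C$-block contributing a multiple of $e_{2j}+ie_{2j+1}$, and the forward $SO(2)$-block doubling. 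Hence $\partial_z f_j$ is a multiple of $f_j$ plus $2^j c_1\cdots c_{j+1}F(e_{2j+2}+ie_{2j+3})$, and since the latter is $\langle\cdot,\overline{f_j}\rangle$-orthogonal to $f_j$ the Gram--Schmidt step in \eqref{harm} removes the $f_j$-term and yields $f_{j+1}=2^j c_1\cdots c_{j+1}F(e_{2j+2}+ie_{2j+3})$. This gives the first displayed formula, and $\|f_j\|^2=\langle f_j,\overline{f_j}\rangle=2|2^{j-1}c_1\cdots c_j|^2$ is positive off the zero set of $c_1\cdots c_j$ (discrete, by \cite{Hulett:05}), so each $f_j$ ($1\le j\le n-1$) is spacelike there; in particular the harmonic sequence extends through $f_{n-1}$.

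At $j=n-1$ the forward block is the $SO(1,1)$-block, with entries from $c_nR_{\alpha_n}+c_0R_{\alpha_0}$, and the same computation gives
\[
A(e_{2n-2}+ie_{2n-1})=ia_{n-1}(e_{2n-2}+ie_{2n-1})+2\bigl((c_n+c_0)e_{2n}+(c_0-c_n)e_{2n+1}\bigr),
\]
whence $f_n=2^{n-1}c_1\cdots c_{n-1}\,F\bigl((c_n+c_0)e_{2n}+(c_0-c_n)e_{2n+1}\bigr)$, again defined everywhere. Iterating \eqref{harm} gives $\partial_z^n f=f_n+\sum_{j=1}^{n-1}\gamma_j f_j$ (no $f_0$-term, as $\mu_0=0$); since $\langle f_j,f_k\rangle=0$ for $1\le j,k\le n-1$ and $\langle f_n,f_j\rangle=0$ for $j<n$ — both immediate from the formulae — the self-pairing collapses to $\langle f_n,f_n\rangle$, which, evaluated with $\langle e_{2n},e_{2n}\rangle=1=-\langle e_{2n+1},e_{2n+1}\rangle$, is the claimed (nonzero) multiple of $c_nc_0$. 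Finally $f$ is superconformal: tracking the osculating spaces $F^{-1}\Span\{f,\partial_z f,\dots,\partial_z^k f\}$, which fill the isotropic flag as $k$ runs up to $n-1$ and then the complementary $SO(2)$- and $SO(1,1)$-blocks as $k$ runs up to $2n-1$, one sees all pairings $\langle\partial_z^a f,\partial_z^b f\rangle$ with $a+b\le 2n-1$ vanish, while $\langle\partial_z^n f,\partial_z^n f\rangle\not\equiv 0$ by the formula and cyclicity, so the isotropy order is exactly $n-1$.

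Conversely, suppose $f$ is superconformal with harmonic sequence $\{f_0,\dots,f_n\}$ defined everywhere. Off the (discrete) union of the zero sets of $f_1,\dots,f_{n-1}$ each $f_j$ ($1\le j\le n-1$), being null in $\langle\cdot,\cdot\rangle$ but spacelike in $\|\cdot\|^2$, may be written $f_j=g_j(u_j+iv_j)$ with $u_j\perp v_j$ real, spacelike, of equal length; I would normalise these and form $F=(f_0,u_1,v_1,\dots,u_{n-1},v_{n-1},u_n,v_n)\in SO(2n,1)$, where $u_n,v_n$ are a spacelike/timelike orthonormal pair spanning the $\langle\cdot,\cdot\rangle$-orthogonal complement of the preceding $2n-1$ spacelike vectors, necessarily a Lorentzian $2$-plane. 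Using $\partial_z f_0=f_1$, $\partial_z f_j\in\Span\{f_j,f_{j+1}\}$, $\partial_{\bar z}f_{j+1}=-(\|f_{j+1}\|^2/\|f_j\|^2)f_j$, the Hermitian orthogonalities $\langle f_j,\overline{f_k}\rangle=0$ for $j\ne k$ and the remaining identities of \cite{Hulett:05} (and their $\partial_z$-derivatives), one checks $F^{-1}F_z$ couples each block only to itself (a $\t^\C$-term) and to the next block in the forward direction, i.e.\ lies in $\t^\C\oplus\bigoplus_{j=0}^{n}\mathcal G^{\alpha_j}$; thus $F$ is a primitive frame, hence of the form \eqref{eq:matrix}, and $\psi=\pi\circ F$, extended by continuity across the discrete set to the flag manifold, is a primitive lift of $f$. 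Cyclicity follows: $c_j\ne 0$ for $1\le j\le n-1$ since $f_j\not\equiv 0$, and $c_nc_0\ne 0$ because, $F$ now being of the form \eqref{eq:matrix}, the computation above exhibits $\langle\partial_z^n f,\partial_z^n f\rangle$ as a nonzero constant times $c_nc_0$, and this is $\not\equiv 0$ as $f$ is superconformal.

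I expect the main obstacle to be the frame construction in the converse direction — specifically, choosing $u_n,v_n$ so that the block-$(n-1)$-to-block-$n$ part of $F^{-1}F_z$ lies in $\mathcal G^{\alpha_n}\oplus\mathcal G^{\alpha_0}$ and genuinely spreads over both summands; this is precisely where the $SO(1,1)$-twist, and with it the passage from classical to affine Toda structure, enters, and it has no counterpart in the $SO(2)$-blocks. A secondary technical point is the discreteness of the zero sets of the $c_j$ (equivalently of the $f_j$), needed to make \eqref{harm} well defined and to extend $\psi$ across those points, which should follow from the Maurer--Cartan equation for \eqref{eq:matrix} together with \cite[Lemma 3.1]{Hulett:05}.
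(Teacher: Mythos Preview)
Your proposal is correct and follows essentially the same route as the paper: in the forward direction both you and the paper define the candidate sequence $f_j=2^{j-1}c_1\cdots c_j\,F(e_{2j}+ie_{2j+1})$ and verify \eqref{harm} by induction (the paper phrases the inductive step via the inner products $\langle\partial_z f_j,Fe_k\rangle$, you via the equivalent computation of $A(e_{2j}+ie_{2j+1})$), then read off superconformality and the formula for $\langle\partial_z^n f,\partial_z^n f\rangle$; in the converse direction both construct $F$ column-by-column from the normalised real and imaginary parts of the $f_j$ and complete with an orthonormal pair in the Lorentzian $2$-plane, then verify primitivity by the listed orthogonality relations.

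One remark on the obstacle you anticipate: it does not in fact arise. The paper simply takes \emph{any} smoothly varying orthonormal pair $v,w$ of signature $(1,1)$ in the orthogonal complement; the bottom-right $2\times 2$ block of $F^{-1}F_z$ then automatically lands in $\t^\C$ (it is a multiple of $[2n,2n+1]$), and the coupling from block $n-1$ to block $n$ automatically lies in $\mathcal G^{\alpha_n}\oplus\mathcal G^{\alpha_0}$ because these two root spaces together span the relevant off-diagonal block. No special choice of $u_n,v_n$ is needed for primitivity, and then---as you yourself note in your final paragraph---cyclicity ($c_nc_0\not\equiv 0$ separately) is forced a posteriori by feeding the primitive frame back through the forward computation and invoking superconformality. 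So your ``main obstacle'' dissolves, and your argument goes through as written.
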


\begin{proof}
First suppose that $f$ has a cyclic primitive frame  $F :\C \to SO(2n,1)$. Set
\begin {equation}\label {eq:primitivesequence}
 f_0=f\text { and } f_j:=2^{j-1}c_1\ldots c_j F(e_{2j} + ie_{2j+1})
\end {equation}
 for each $1\leq j\leq n-1$. We will prove $f$ is superconformal and that these $f_j$ agree with the harmonic sequence of $ f $.
This latter statement comes from showing that the $f_j$ satisfy \eqref{harm}, which we prove by induction. The $j=1$ case follows from $\langle \partial_z f, f\rangle =0$ and
\begin{align*}
\langle \partial_z f, F e_k\rangle &=
\begin{cases}
c_1 & \text{ if } k=2\\
i c_1 & \text{ if } k=3\\
0 & \text{otherwise}.
\end{cases}
\end{align*}
For the inductive step consider $1\leq j \leq n-2$. From
\[
f_j:=2^{j-1}c_1\ldots c_j F(e_{2j} + ie_{2j+1})
\]
 we have
\[
\partial_z f_j= \partial_z(2^{j-1}c_1\ldots c_j ) F(e_{2j} + ie_{2j+1}) + 2^{j-1}c_1\ldots c_j F_z(e_{2j} + ie_{2j+1})
\]
 and hence
$$\partial_z f_j - \frac{\langle \partial_z f_j, \bar{f_j}\rangle}{\langle f_j, \bar{f_j}\rangle}f_j =
2^{j-1}c_1\ldots c_j (F_z(e_{2j} + ie_{2j+1}) -  i a_j F(e_{2j}+ie_{2j+1})), $$
where $ a_j $ is also defined in  \eqref {eq:matrix}.
This implies
\begin{align*}
\langle \partial_z f_j - \frac{\langle \partial_z f_j, \bar{f_j}\rangle}{\langle f_j, \bar{f_j}\rangle}f_j, F e_k\rangle
&=
\begin{cases}
2^j c_1 \ldots c_jc_{j+1} & \text{ if } k=2j+2\\
i2^j c_1 \ldots c_jc_{j+1}  & \text{ if } k=2j+3\\
0 & \text{otherwise}
\end{cases}
\end{align*}
which proves \eqref {eq:primitivesequence} for $ j +1 $. 
 Thus we know that the sequence $\{f_0, f_1, \ldots f_{n-1}\}$ agrees with the harmonic sequence for $f$.

Importantly this characterisation of the $f_j$ tells us inductively that
\[
f_j = \partial_z^j f + \sum_{k=1}^{j-1} u_k \partial_z^k f\text { for some functions } u_k.
\]
 From $F \in SO(2n,1)$ and our choice of $f_j$, we have that $\langle f_j, f_k \rangle =0$ for $1\leq j\leq n-1$ and $0\leq k\leq n-1$. Induction then shows that $\langle \partial_z^\alpha f, \partial_z^\beta f\rangle = 0$ for $1\leq \alpha +\beta \leq 2n-2$ and $\alpha, \beta \geq 0$. From $\langle \partial_z^{n-1} f, \partial_z^{n-1} f\rangle = 0$ we also obtain $\langle \partial_z^{n} f, \partial_z^{n-1} f\rangle = 0$ by taking the $z$-derivative.

To prove that $f$ is superconformal it now only remains to show that $\langle \partial_z^n f, \partial_z^n f\rangle$ is not the zero function. But
\begin{align*}
\langle \partial_z^n f, \partial_z^n f\rangle&= \langle \partial_z f_{n-1},  F e_{2n}\rangle^2 -  \langle \partial_z f_{n-1},  F e_{2n+1}\rangle^2\\
&=2^{2n-4}c_1^2 \ldots c_{n-1}^2\langle  F^{-1}F_z(e_{2n-2}+i e_{2n-1}), e_{2n}\rangle^2\\
&\qquad -2^{2n-4}c_1^2 \ldots c_{n-1}^2\langle F^{-1}F_z(e_{2n-2}+i e_{2n-1}), e_{2n+1}\rangle^2\\
&=2^{2n-2}c_1^2\ldots c_{n-1}^2c_nc_0
\end{align*}
and $F$ is a cyclic primitive frame, so these $c_k$ may vanish only isolated points.  Hence $\langle \partial_z^n f, \partial_z^n f\rangle$ is also nonzero except possibly on a discrete set.

It follows directly from  \eqref {eq:primitivesequence} that for $ 1\leq j\leq n -1 $, the functions $ f_j $ are space-like at all points at which they do not vanish. At any point $ z\in\C $ such that either $|| f_j (z)||\neq 0 $ we may use  \eqref {harm} to define $ f_{j +1} (z) $ and if $ f_j (z) = 0 $ then this definition extends by continuity. So the fact that the only points where $ f_{n -1} $ has zero norm are the points where it vanishes guarantees that $ f_{n} $ is defined everywhere.

To complete the proof it only remains to provide a cyclic primitive frame $F:\C \to SO(2n,1)$ of $f:\C \to S^{2n}_1$ when $f$ is a superconformal harmonic map such that the elements $\{ f_1 \ldots f_{n}\}$ of its harmonic sequence are defined everywhere.
 In particular then for $ 1\leq j\leq n -1 $, the functions $ f_j $ have zero norm only at points where they vanish. Away from the  zero set of $ f_j $, we may define all but the last two columns of $ F  $ by
\begin{equation}\label{eq:realframe}
Fe_1= f,\quad Fe_{2j} = \frac{f_j + \overline{f_j}}{\sqrt{2}\|f_j\|}, \quad Fe_{2j+1} =i\frac{\overline{f_j} - f_j}{\sqrt{2}\|f_j\|} \text { for $1\leq j \leq n-1$}
\end{equation}
and use continuity to extend this definition to the isolated points at which some $ f_j $ vanishes. The orthogonal space to the first $2n-1$ columns of $F$ (ie the space orthogonal to $\text{span }\{f, f_1, \bar{f_1}, \ldots, f_{n-1}, \overline{f_{n-1}}\}$) is the complexification of a real two-dimensional space with signature (1,1). Thus we can find  smoothly changing $v,w$ in this space such that $\langle v,v\rangle=1 = -\langle w,w\rangle$. Define $Fe_{2n}$ and $Fe_{2n+1}$ to be $v$ and $w$ respectively.

Note that from $\langle f_j, f_k \rangle =0$ and $\langle f_j, \overline{f_k}\rangle = 0$ we have that $F$ is valued in $SO(2n,1)$. Verifying that $F^{-1}F_z \in \g ^\sigma_0  \oplus \g ^\sigma_1$ may be achieved by  a direct calculation in which one rewrites matrix entries as bilinear products of the $f_j, \bar{f_j}, \partial_z f_j, \partial_z\bar{f_j}, Fe_{2n}$, and $Fe_{2n+1}$ and then employs the following observations 
\begin{align*}
\langle f_j, \partial_z f_k \rangle &= 0 = \langle \bar{f_j}, \partial_z \bar{f_k}\rangle & \text{ for all } j,k\\
\langle f_j, \partial_z \bar{f_k} \rangle& = 0 = \langle \bar{f_j}, \partial_z  f_k\rangle & \text{ for } j \neq k+1\\
\langle Fe_{k}, f_j \rangle &= 0 = \langle Fe_{k}, \bar{f_j}\rangle = \langle Fe_k, \partial_z \bar{f_j}\rangle & \text{ for all $j$ and $k=2n,2n+1$}\\
\langle Fe_k, \partial_z f_j \rangle &=0 & \text{ for $j\neq n-1$ and $k=2n,2n+1$}.
\end{align*}
\end{proof}

\begin{rem}
If we omit the cyclic condition, we find that harmonic $f$ with primitive lifts correspond to either isotropic or superconformal harmonic maps such that the entries $\{f_1, f_2,\ldots , f_n\} $ of the harmonic sequence of $ f $ are defined everywhere.
\end{rem}

\begin{cor}\label{cor:primitive}
Let $f:\C \to S^{2n}_1$ be a doubly periodic superconformal map such that the elements $ f_1, \ldots, f_n$ of the harmonic sequence of $ f $ are everywhere defined. Then $\langle \partial_z^n f, \partial_z^n f\rangle$ is a non-zero constant and $ f_1 \ldots f_{n-1}$ are all spacelike.
\end{cor}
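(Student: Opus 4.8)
The plan is to read off the relevant identities from Theorem~\ref{thm:cyclicprimitive} and then combine a holomorphicity observation with Liouville's theorem. Since $f$ is superconformal and the entries $\{f_1,\dots,f_n\}$ of its harmonic sequence are everywhere defined, Theorem~\ref{thm:cyclicprimitive} provides a cyclic primitive frame $F\colon\C\to SO(2n,1)$ of $f$ with
\[
\langle\partial_z^n f,\partial_z^n f\rangle = 2^{2n-2}c_1^2\cdots c_{n-1}^2\,c_n c_0 \qquad\text{and}\qquad f_j = 2^{j-1}c_1\cdots c_j\,F(e_{2j}+ie_{2j+1})\ \ (1\le j\le n-1).
\]
Since $F\in SO(2n,1)$, the vector $F(e_{2j}+ie_{2j+1})=Fe_{2j}+iFe_{2j+1}$ is nowhere zero, so $f_j$ vanishes exactly on the zero set of $c_1\cdots c_j$, and Theorem~\ref{thm:cyclicprimitive} already tells us that $f_j$ is spacelike off this set. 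Thus the corollary reduces to showing that $\langle\partial_z^n f,\partial_z^n f\rangle$ is a nowhere-vanishing constant, because this forces each $c_j$ ($1\le j\le n-1$) to be nowhere zero and hence each $f_j$ to be everywhere spacelike.

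First I would prove that $\langle\partial_z^n f,\partial_z^n f\rangle$ is holomorphic on $\C$. Because $f$ is a weakly conformal harmonic map into the unit de Sitter sphere, whose Minkowski normal line at $f$ is spanned by $f$, its tension field equation reduces to $\partial_z\partial_{\bar{z}}f=\rho f$ for some smooth function $\rho$. Differentiating $n-1$ times gives $\partial_{\bar{z}}\partial_z^n f=\partial_z^{n-1}(\rho f)$, a linear combination of $f,\partial_z f,\dots,\partial_z^{n-1}f$, so $\partial_{\bar{z}}\langle\partial_z^n f,\partial_z^n f\rangle=2\langle\partial_{\bar{z}}\partial_z^n f,\partial_z^n f\rangle$ is a combination of the terms $\langle\partial_z^k f,\partial_z^n f\rangle$ with $0\le k\le n-1$. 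Each of these vanishes identically because $f$ has isotropy order $n-1$ and $1\le k+n\le 2n-1$; hence $\partial_{\bar{z}}\langle\partial_z^n f,\partial_z^n f\rangle\equiv 0$.

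Next I would invoke double periodicity: since $f$ is $\Lambda$-periodic for a rank-two lattice $\Lambda$, so is $\partial_z^n f$, and therefore the holomorphic function $\langle\partial_z^n f,\partial_z^n f\rangle$ descends to the compact torus $\C/\Lambda$ and is constant by Liouville's theorem. By the definition of superconformality (isotropy order exactly $n-1$) this constant is not identically zero, hence is a nonzero constant. Substituting back into the first displayed identity shows $c_1^2\cdots c_{n-1}^2\,c_n c_0$ is a nonzero constant, so in particular $c_1,\dots,c_{n-1}$ are nowhere zero, which as explained above completes the proof.

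I do not expect a serious obstacle: the only genuinely computational ingredient is the reduction $\partial_z\partial_{\bar{z}}f=\rho f$ and the bookkeeping that the relevant inner products $\langle\partial_z^k f,\partial_z^n f\rangle$ vanish by superconformality, and everything else (the input from Theorem~\ref{thm:cyclicprimitive}, Liouville, and the non-vanishing of $F(e_{2j}+ie_{2j+1})$) is immediate. Should one wish to bypass the tension field equation, an alternative is to differentiate the expression for $\langle\partial_z^n f,\partial_z^n f\rangle$ in terms of $\partial_z f_{n-1}$ derived in the proof of Theorem~\ref{thm:cyclicprimitive} and use the harmonic sequence relations $\partial_{\bar{z}}f_{j+1}=-\tfrac{\|f_{j+1}\|^2}{\|f_j\|^2}f_j$ to see directly that this quantity is holomorphic in $z$.
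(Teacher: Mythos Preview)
Your proposal is correct and follows essentially the same route as the paper's own proof: both use harmonicity to get $\partial_z\partial_{\bar z}f\in\C f$, deduce that $\langle\partial_z^n f,\partial_z^n f\rangle$ is holomorphic by the isotropy order condition, apply double periodicity to conclude it is a nonzero constant, and then feed this into the formula from Theorem~\ref{thm:cyclicprimitive} to force the $c_j$ to be nowhere vanishing and hence each $f_j$ spacelike. Your write-up is slightly more detailed in justifying the vanishing of the terms $\langle\partial_z^k f,\partial_z^n f\rangle$, but the argument is the same.
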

\begin{proof}
The double-periodicity of $f$ implies that $\langle \partial_z^n f, \partial_z^n f\rangle$ is also doubly periodic. Since $ f $ is harmonic we know that $\partial_z \partial_{\bar {z}} f\in \C f$ and hence
\[
\partial_{\bar {z}} \langle \partial_z^n f, \partial_z^n f\rangle = 2\langle \partial_z^{n-1}(\partial_z \partial_{\bar {z}} f), \partial_z^n f \rangle =0.
\]
 Thus $\langle \partial_z^n f, \partial_z^n f\rangle$ must be constant and as $f$ is superconformal this constant is non-zero. The formula for $ \langle \partial_z^n f, \partial_z^n f\rangle$ in Theorem~\ref{thm:cyclicprimitive} then implies that the root vector coefficients $c_j $ in  \eqref {eq:matrix} never vanish. Hence for each $ j = 1, \ldots , n  -1 $ the function $f_j = 2^{j-1}c_1 \ldots c_jF(e_{2j}+ie_{2j+1})$ is spacelike everywhere.
\end{proof}

%

\section{Toda frame and Finite type result}\label{Toda}
For the chosen simple roots  $\alpha_1,\ldots,\alpha_n $ and lowest root $\alpha_0 $ of $\mathfrak {so} (2n, 1) $ (see page ~\pageref {page:simple}) 
the permutation $\pi $ of the roots defined by
\[
\pi (\alpha_j ) = \alpha_j, \; 1 <j <n \text { and }\pi (\alpha_n ) = \alpha_0 
\]
satisfies
\[
\overline { \alpha_j} = -\alpha_{\pi (j)}.
\]
where $\Omega:\C\rightarrow i\t $ and the root vectors $ R_{\alpha_j} $ and $ m_j\in\R ^ + $

For any simple real Lie algebra $\g $ with simple roots $\alpha_1, \ldots ,\alpha_N $ and lowest root $\alpha_0 $, in  \cite{CT:11} we consider the slight generalisation of the two-dimensional affine Toda equation
\begin{equation}\label{eq:Toda}
2\Omega_{z\bz}  = \sum_{j = 0}^ N m_j e^{2 \alpha_j (\Omega)} [R_{\alpha_j}, R_{-\alpha_j}]
\end{equation} 
where $\Omega:\C\rightarrow i\t $ and the root vectors $ R_{\alpha_j} $ and  positive real numbers $ m_j$
satisfy the reality conditions
\[
\overline {m_j} = m_{\pi (j)},\quad\overline {R_{\alpha_j}} = R_{-\pi (\alpha_j)}.
\]

The classical case of the affine Toda equation is the one-dimensional equation for $\g = \mathfrak {su} (n) $, in which $\Omega_{z\bar z} $ is replaced by the second derivative of $\Omega $ with respect to a single real variable, the root vectors
are chosen so that $ [R_{\alpha_j}, R_ {-\alpha_j}] $ is the dual of $\alpha_j $ with respect to the Killing form and
\[
\alpha_0 = -\sum_{j = 1} ^ Nm_j\alpha_j.
\]
 This describes the motion of particles arranged in a circle, connected by springs with exponential potentials.

Given a cyclic element $W = \sum_{j = 0}^N r_j R_{\alpha_j}$ of
$\mathfrak{g}^{\sigma}_1$ with  $ r_{\pi (j)} =\overline {r_j} $ and $\overline {R_{\alpha_j}} = R_{-\alpha_{\pi (j)}}$, we say that a lift $F : \C
\to G$ of $\psi : \C \rightarrow G / T$ is a \emph{Toda frame} with
respect to $W$ if there exists a smooth map $\Omega : \C \rightarrow i
\mathfrak{t}$ such that
\begin{equation}
  \label{eq:Todaframe} F^{- 1} F_z = \Omega_z + \Ad_{\exp \Omega} W.
\end{equation}

We call $\Omega$ an \emph{affine Toda field} with respect to $W$. Then
\cite [Lemma 4.1] {CT:11}
the affine Toda field equation  \eqref {eq:Toda}  is the integrability condition for the existence
  of a Toda frame with respect to $ W $ where $m_j = r_j \overline{r_j}$ for $j
  = 0, \ldots, N$.
\begin{theorem}\cite [Theorem 4.2] {CT:11} \label {theorem:Toda}
A map $\psi:\C\rightarrow G/T $ possesses a Toda frame if and only if it has a cyclic primitive frame for which $c_0 \prod_{j =
  1}^ N c_j^{m_j}$ is constant.

More precisely, let $\psi : \C \to G / T$ be a cyclic primitive map possessing a frame $\tilde{F} : \C \to G$  such that $c_0 \prod_{j =
  1}^ N c_j^{m_j}$ is a non-zero constant, where $ c_j $ are the coefficients with respect to any fixed choice of root vectors.\footnote{Observe that
whether $c_0 \prod_{j =
  1}^ N c_j^{m_j}$ is  a non-zero constant is independent of the choice of root vectors.}  Then for any cyclic element $W $ of $\g ^\sigma_1 $ which is normalised with respect to $\tilde F $
  there exists a Toda frame $F : \C \to G$ of $\psi$ with respect to $W$. Furthermore if $\psi$ and $\tilde F$ are doubly periodic with lattice $\Lambda$  then so is the Toda frame $ F $.

Conversely, if $\psi:\C \to G/T$ has a Toda frame $ F $ with respect to cyclic $ W\in\g_1 ^\sigma $
then $\psi$ is cyclic primitive and $ W $ is normalised with respect to $ F $. In particular then the root coefficients $c_j$ are such that $c_0 \prod_{j =
  1}^ N c_j^{m_j} $ is constant. 
  
\end{theorem}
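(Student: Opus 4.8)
This is \cite[Theorem~4.2]{CT:11}; I indicate how I would prove it.

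\emph{Converse.} Suppose $F:\C\to G$ is a Toda frame for $\psi$ with respect to a cyclic $W=\sum_{j=0}^N r_jR_{\alpha_j}\in\g_1^\sigma$. Since $\Omega$ is $i\t$-valued, each $R_{\alpha_j}$ is an $\ad_\Omega$-eigenvector with eigenvalue $\alpha_j(\Omega)$, so $F^{-1}F_z=\Omega_z+\sum_j r_je^{\alpha_j(\Omega)}R_{\alpha_j}$; this lies in $\g_0^\sigma\oplus\g_1^\sigma=\t^\C\oplus\bigl(\bigoplus_{j=0}^N\mathcal G^{\alpha_j}\bigr)$, so $\psi=\pi\circ F$ is primitive with primitive frame $F$, and its root coefficients $c_j=r_je^{\alpha_j(\Omega)}$ are nowhere zero because $W$ is cyclic, whence $\psi$ is cyclic primitive. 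Finally, since the marks satisfy $\alpha_0+\sum_{j=1}^N m_j\alpha_j=0$,
\[
c_0\prod_{j=1}^N c_j^{m_j}=r_0\prod_{j=1}^N r_j^{m_j}\cdot e^{(\alpha_0+\sum_j m_j\alpha_j)(\Omega)}=r_0\prod_{j=1}^N r_j^{m_j}
\]
is constant; the statement that $W$ is normalised with respect to $F$ is precisely that this constant is the common value of $c_0\prod_j c_j^{m_j}$.

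\emph{Direct statement: the plan.} Fix a cyclic primitive frame $\tilde F$ of $\psi$ and write $\tilde F^{-1}\tilde F_z=\xi_0+\sum_{j=0}^N \tilde c_jR_{\alpha_j}$ with $\xi_0:\C\to\t^\C$; by hypothesis $\tilde c_0\prod_j\tilde c_j^{m_j}$ is a nonzero constant (so each $\tilde c_j$ is in fact nowhere zero), and $W=\sum_j r_jR_{\alpha_j}$ is cyclic and normalised with respect to $\tilde F$. I would seek the Toda frame in the form $F:=\tilde F\exp(\mu)$ with $\mu:\C\to\t$, which leaves the projection to $G/T$ unchanged. Because $T$ is abelian and $\xi_0$ is $\t^\C$-valued, $F^{-1}F_z=\xi_0+\mu_z+\sum_j\tilde c_je^{-\alpha_j(\mu)}R_{\alpha_j}$, so demanding $F^{-1}F_z=\Omega_z+\Ad_{\exp\Omega}W$ for some $\Omega:\C\to i\t$ amounts to the root-space equations $\alpha_j(\Omega+\mu)=\log(\tilde c_j/r_j)$, $j=0,\dots,N$, together with the Cartan equation $(\Omega-\mu)_z=\xi_0$. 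As $\alpha_1,\dots,\alpha_N$ is a basis of $(\t^\C)^*$ and the $\tilde c_j/r_j$ are nowhere vanishing on the simply connected $\C$, a choice of branch makes $\Theta:=\sum_{j=1}^N\log(\tilde c_j/r_j)\,\eta_j$ (with $\eta_j$ the dual basis of \eqref{eq:dual}) a well-defined $\t^\C$-valued function solving the root-space equations for $j\geq 1$, and the remaining $j=0$ equation then holds because $\alpha_0=-\sum_j m_j\alpha_j$ and $\tilde c_0\prod_j\tilde c_j^{m_j}$ has exactly the value attached to $W$ — this is where the constancy hypothesis and the normalisation of $W$ enter. Splitting $\Theta$ into its summands in $\t^\C=\t\oplus i\t$ produces $\mu$ and $\Omega$, and the Cartan equation becomes $\Theta_{\bar z}=-\overline{\xi_0}$, which is nothing but the $\g_1^\sigma$-component of the (automatically valid) Maurer--Cartan equation $d(\tilde F^{-1}d\tilde F)+\tfrac12[\tilde F^{-1}d\tilde F\wedge\tilde F^{-1}d\tilde F]=0$, read off using that $\tilde F$ is \emph{primitive}; compare \cite[Lemma~4.1]{CT:11}. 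Thus $F=\tilde F\exp(\mu)$ is a Toda frame for $\psi$ with respect to $W$, with affine Toda field $\Omega$, which solves \eqref{eq:Toda} because that equation is the integrability condition for the existence of such a frame.

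\emph{Remaining points and the main obstacle.} Two things must still be checked carefully. First, that $F$ is genuinely $G$-valued and that $\Omega$ is an honest solution of \eqref{eq:Toda} with real positive coefficients: the former is clear since $\mu$ is $\t$-valued, and the latter rests on the reality conditions $\overline{r_j}=r_{\pi(j)}$, $\overline{R_{\alpha_j}}=R_{-\pi(\alpha_j)}$ and $\overline{\alpha_j}=-\alpha_{\pi(j)}$ (with $\pi$ the involution interchanging $\alpha_0$ and $\alpha_n$). Second — and this is the step I expect to be the real work — that if $\psi$ and $\tilde F$ are $\Lambda$-periodic then so is $F$. Here one uses that the periodic, nowhere-zero functions $\tilde c_j$ have integer winding numbers $N_j(\omega)$ along the generators $\omega$ of $\Lambda$, so $\Theta$ acquires the additive jump $2\pi i\sum_{j=1}^N N_j(\omega)\eta_j$, whence $\mu$ acquires the jump $\pi i\sum_j N_j(\omega)(\eta_j-\overline{\eta_j})$; from the explicit form $T=\diag\{1,SO(2),\dots,SO(2),SO(1,1)\}$ and formula \eqref{eq:dual} one sees that $\pi i(\eta_j-\overline{\eta_j})$ is $2\pi$ times an integral combination of the rotation generators of $T$, hence lies in $\ker(\exp\colon\t\to T)$, so $\exp(\mu)$ — and with it $F=\tilde F\exp(\mu)$ — is $\Lambda$-periodic. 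No new idea is needed beyond the gauge transformation above, but keeping track of all the sign and normalisation conventions (the affine root $\alpha_0$ versus the simple roots, the dual bases, the reality data, and the split between the compact directions and the $SO(1,1)$ direction of $T$) is the fussiest part of the argument.
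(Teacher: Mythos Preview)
The paper does not prove this theorem; it is quoted from \cite[Theorem~4.2]{CT:11} and used as a black box, so there is no in-paper argument to compare your proposal against. Your sketch is the standard gauge-transformation proof and is essentially what one finds in \cite{CT:11}: modify a given cyclic primitive frame by $\exp(\mu)$ with $\mu$ Cartan-valued, solve for $\Theta=\Omega+\mu$ from the root-space equations using the dual basis, verify the remaining $j=0$ equation via the normalisation of $W$ together with the constancy of $c_0\prod_j c_j^{m_j}$, and read the Cartan-part equation off the Maurer--Cartan identity for $\tilde F$. The converse computation you give is correct as written.

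One small point of scope: the theorem is stated for an arbitrary simple real $G$, whereas your periodicity argument appeals to the explicit dual basis \eqref{eq:dual} and the block description of $T$, both of which are particular to $SO(2n,1)$. The argument in \cite{CT:11} handles the periodicity step root-theoretically, showing that the monodromy of $\mu$ along each $\omega\in\Lambda$ lies in the kernel of $\exp\colon\t\to T$ without inspecting a specific root system; if you want your write-up to match the generality of the statement, recast that last paragraph accordingly.
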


When combined with Theorem~\ref{thm:cyclicprimitive} this enables us to give the following relationship between superconformal harmonic maps into de Sitter spaces and   Toda frames.
\begin {corollary}\label {corollary:Toda}
Let  $ f:\C\rightarrow S ^ {2n}_1 $ be a superconformal harmonic map such that the elements $ f_1, \ldots, f_n$ of the harmonic sequence of $ f $ are everywhere defined and the root space coefficients $ c_j $ are such that $c_0 c_1 ^ 2c_2 ^ 2 \ldots c_{n -1} ^ 2c_n $ is constant. Then $ f $ has a lift $\psi:\C\rightarrow SO (2n, 1)/T $ possessing a Toda frame with respect to any appropriately normalised cyclic element $ W $.

 If $ f $ is doubly-periodic with respect to some lattice $\Lambda\subset\C $ then the condition on the root space coefficients is automatically satisfied and furthermore the lift $\psi $ is periodic with respect to $\Lambda $.
\end {corollary}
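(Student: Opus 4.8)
The plan is to obtain this as an essentially bookkeeping consequence of Theorem~\ref{thm:cyclicprimitive}, Corollary~\ref{cor:primitive} and the transfer result Theorem~\ref{theorem:Toda} of~\cite{CT:11}: almost no new computation is needed, only an identification of the relevant data for $\mathfrak{so}(2n,1)$ together with a periodicity check. First I would invoke Theorem~\ref{thm:cyclicprimitive}: since $f$ is superconformal with everywhere-defined harmonic sequence $\{f_1,\ldots,f_n\}$, it admits a cyclic primitive lift $\psi:\C\to SO(2n,1)/T$ and hence a cyclic primitive frame $F:\C\to SO(2n,1)$, for which that theorem records
\[
\langle\partial_z^n f,\partial_z^n f\rangle=2^{2n-2}c_1^2c_2^2\cdots c_{n-1}^2c_nc_0,
\]
where $c_0,\ldots,c_n$ are the coefficients of the $\g_1^\sigma$-component of $F^{-1}F_z$ in the root vectors $R_{\alpha_0},\ldots,R_{\alpha_n}$.

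Next I would read off the multiplicities in the affine Toda equation~\eqref{eq:Toda} for $\g=\mathfrak{so}(2n,1)$: the expansion $\alpha_0=-2\alpha_1-2\alpha_2-\cdots-2\alpha_{n-1}-\alpha_n$ of the lowest root (page~\pageref{page:simple}) gives $N=n$ and $m_1=\cdots=m_{n-1}=2$, $m_n=1$, so that
\[
c_0\prod_{j=1}^{n}c_j^{m_j}=c_0c_1^2c_2^2\cdots c_{n-1}^2c_n=2^{\,2-2n}\langle\partial_z^n f,\partial_z^n f\rangle .
\]
By hypothesis the left-hand side is constant, and superconformality of $f$ forces $\langle\partial_z^n f,\partial_z^n f\rangle\not\equiv0$, so it is a \emph{non-zero} constant. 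Theorem~\ref{theorem:Toda}, applied with $G=SO(2n,1)$, then asserts that for any cyclic element $W\in\g_1^\sigma$ normalised with respect to $F$ the lift $\psi$ has a Toda frame with respect to $W$; this is the first assertion of the corollary, with ``appropriately normalised'' being exactly the normalisation condition of that theorem.

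For the doubly-periodic case, assume $f$ is $\Lambda$-periodic. Then $\langle\partial_z^n f,\partial_z^n f\rangle$ is $\Lambda$-periodic, and Corollary~\ref{cor:primitive} shows it is a non-zero constant; via the displayed identity this says $c_0c_1^2\cdots c_{n-1}^2c_n$ is a non-zero constant, so the hypothesis of the corollary is automatic. For the periodicity of $\psi$, note that the cyclic primitive lift is uniquely determined by $f$: it is the isotropic flag $V_1\subset\cdots\subset V_{n-1}$ with $V_j=\Span_\C\{f_1,\ldots,f_j\}$ (using $f_j=2^{j-1}c_1\cdots c_jF(e_{2j}+ie_{2j+1})$ from Theorem~\ref{thm:cyclicprimitive}, the $c_j$ being non-vanishing by Corollary~\ref{cor:primitive}). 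Since the harmonic sequence of $f$ is built from $f$ by the $\Lambda$-equivariant recursion~\eqref{harm}, it is $\Lambda$-periodic, hence so is each $V_j$, and therefore $\psi$ is $\Lambda$-periodic.

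So the proof is an assembly of results already in hand; the only point which is not pure bookkeeping is that the constancy hypothesis is genuinely automatic under double periodicity, and this rests on the computation $\partial_{\bar z}\langle\partial_z^n f,\partial_z^n f\rangle=0$ from harmonicity of $f$ carried out in Corollary~\ref{cor:primitive}. (Note that the corollary asserts only periodicity of the lift $\psi$; to make the Toda frame itself $\Lambda$-periodic one would additionally need a $\Lambda$-periodic cyclic primitive frame $F$ and the corresponding clause of Theorem~\ref{theorem:Toda}, but that is not claimed here.)
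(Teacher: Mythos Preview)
Your proof is correct and follows essentially the same route as the paper: invoke Theorem~\ref{thm:cyclicprimitive} to obtain a cyclic primitive lift and the identity $c_0c_1^2\cdots c_{n-1}^2c_n=2^{2-2n}\langle\partial_z^n f,\partial_z^n f\rangle$, then feed this into Theorem~\ref{theorem:Toda}; for the doubly-periodic part both you and the paper use holomorphicity of $\langle\partial_z^n f,\partial_z^n f\rangle$ (you via Corollary~\ref{cor:primitive}, the paper directly). Your explicit identification of the $m_j$ from the lowest-root expansion and your argument for the $\Lambda$-periodicity of $\psi$ via the flag $V_j=\Span\{f_1,\ldots,f_j\}$ spell out details the paper leaves implicit, but the underlying argument is the same.
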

\begin {proof} The first and last statements are immediate from Theorems ~\ref{thm:cyclicprimitive} and \ref {theorem:Toda}. It remains only to show that  $c_0 c_1 ^ 2c_2 ^ 2 \ldots c_{n -1} ^ 2c_n $  is constant. But it was shown in the proof of Theorem~\ref{thm:cyclicprimitive},  that 
\[
c_0 c_1 ^ 2c_2 ^ 2 \ldots c_{n -1} ^ 2c_n = 2 ^ {2 - 2n} \langle \partial ^ n_z f,\partial ^ n _{ z} f \rangle 
\]
and hence this quantity is holomorphic for harmonic $ f $. Here the $ c_j $ are as defined in  \eqref {eq:matrix}, that is they are the root space coefficients for the root space decomposition given above  \eqref {eq:matrix}.\end {proof}

The existence of a Toda frame has an important consequence, as we recall from\cite {CT:11}:
\begin{theorem}\cite[Theorem 5.2] {CT:11}
\label{thm:finite}
 Let $ G $ be a simple real Lie group and $ T $ a Cartan subgroup such that $G/T$ is preserved by a Coxeter automorphism $\sigma $.
Suppose $\psi: \Tor\rightarrow G/T$ has a Toda frame $ F:\Tor \to G$. Then $\psi $ is of finite type.
\end{theorem}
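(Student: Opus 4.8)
The plan is to reduce this to the general finite-type machinery for primitive maps into $k$-symmetric spaces, as developed in the integrable-systems literature (Burstall–Pedit, Bolton–Pedit–Woodward, and especially the reformulation adapted to the non-compact setting in the authors' earlier paper \cite{CT:11}). The starting point is the Toda frame: by hypothesis $\psi:\Tor \to G/T$ lifts to $F:\Tor \to G$ with $F^{-1}F_z = \Omega_z + \Ad_{\exp\Omega}W$ for a cyclic $W\in\g_1^\sigma$ and an affine Toda field $\Omega:\C\to i\t$ (defined on the universal cover). The first step is to assemble the associated family of flat connections: set $\alpha_\lambda = \lambda^{-1}(\Ad_{\exp\Omega}W)''\, + \,(\text{diagonal part}) + \lambda(\Ad_{\exp\Omega}W)'$ — more precisely one takes the $\lambda$-deformation $F^{-1}F_z\,dz + F^{-1}F_{\bar z}\,d\bar z$ where the $\g_j^\sigma$-component carries the weight $\lambda^j$ — and checks that the Maurer–Cartan equation holds for all $\lambda\in\C^*$; this is precisely the content of the affine Toda equation \eqref{eq:Toda} being the integrability condition, which we may cite.

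Next I would set up the polynomial Killing field. The key idea, going back to Burstall et al., is that the solution is ``of finite type'' exactly when there is a formal Killing field that is polynomial in $\lambda$: one seeks $\xi_\lambda = \sum_{j=-d}^{d}\lambda^j\xi_j$, with values in the appropriate twisted loop algebra $\Lambda\g_\sigma^\C$, satisfying $d\xi_\lambda = [\xi_\lambda, \alpha_\lambda]$, together with the reality condition and the normalisation that the top coefficient $\xi_d$ (equivalently $\xi_{-d}$) is the cyclic element governing the equation. The existence of such a $\xi_\lambda$ for cyclic elements is where the Coxeter-automorphism hypothesis enters: for a Coxeter (equivalently, a ``$\Z_k$-grading of maximal height'') automorphism, a cyclic element is regular semisimple in the loop algebra, and the adjoint orbit structure lets one solve the recursion for the $\xi_j$ order by order in $\lambda$ starting from the top. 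The torus hypothesis — that the domain is $\Tor$ rather than just $\C$ — guarantees that the construction can be made global: the space of polynomial Killing fields of a given degree is finite-dimensional and the holonomy forces a genuine (not merely formal) polynomial solution rather than a power series.

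The final step is to run the standard argument that the existence of a polynomial Killing field implies finite type in the sense required: the Killing field determines a pair of commuting vector fields on the finite-dimensional space of such fields, and integrating the corresponding ODEs, followed by an explicit ``integrate the flat connection'' step, reconstructs $F$ and hence $\psi$. I would emphasise that the role of Theorem~\ref{theorem:Toda} and the cyclic-primitive framework is to put us exactly in the situation treated abstractly in \cite[§5]{CT:11}; once the Toda frame is in hand, the proof is a citation of that theorem applied to $G=SO(2n,1)$ with the Coxeter automorphism $\sigma$ constructed above. The main obstacle — and the one point that genuinely uses that we are in a non-compact real form — is establishing that the recursion for the polynomial Killing field closes up and that the reality condition $\overline{\xi_\lambda} = \xi_{1/\bar\lambda}$ (with respect to the real form $\mathfrak{so}(2n,1)$, not a compact one) is preserved; this is handled by the characterisation of admissible Cartan involutions versus Coxeter automorphisms in \cite[Prop.~3.1, Thm.~3.2]{CT:11}, which is precisely why the hypothesis is phrased in terms of $G/T$ being preserved by a Coxeter automorphism.
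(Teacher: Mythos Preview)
The paper does not prove Theorem~\ref{thm:finite} at all: it is quoted verbatim from \cite[Theorem~5.2]{CT:11} and simply invoked. So there is no ``paper's own proof'' to compare against; the paper's argument here consists entirely of the citation.

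Your outline is a reasonable high-level sketch of how the proof actually runs in \cite{CT:11}: from the Toda frame one forms the $\lambda$-family of flat connections, uses the regularity of the cyclic element (guaranteed by the Coxeter grading) to solve the recursion producing a \emph{formal} Killing field, and then invokes double periodicity together with ellipticity of the relevant operator to truncate this to a \emph{polynomial} Killing field, which is the definition of finite type. That is indeed the architecture of the argument in the Burstall--Ferus--Pedit--Pinkall tradition, adapted to non-compact real forms.

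There is, however, a genuine confusion in your final paragraph. You write that ``once the Toda frame is in hand, the proof is a citation of that theorem applied to $G=SO(2n,1)$''. This is circular: Theorem~\ref{thm:finite} \emph{is} \cite[Theorem~5.2]{CT:11}, stated for an arbitrary simple real Lie group $G$, not for $SO(2n,1)$ specifically. You appear to be conflating Theorem~\ref{thm:finite} with Corollary~\ref{cor:finite}; the latter is the specialisation to $G=SO(2n,1)$, and its proof is indeed just ``combine Corollary~\ref{corollary:Toda} with Theorem~\ref{thm:finite}''. If you are asked to prove Theorem~\ref{thm:finite} itself, you must carry out the Killing-field construction in full generality for an arbitrary simple $G$ with Cartan subgroup $T$ preserved by a Coxeter automorphism; you cannot appeal to the statement you are proving. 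Similarly, your remark that the reality condition ``is handled by \cite[Prop.~3.1, Thm.~3.2]{CT:11}'' misidentifies the role of those results: they classify which real forms admit a compatible Coxeter automorphism and are used here only to justify that $\sigma$ preserves $\mathfrak{so}(2n,1)$, not to close the Killing-field recursion.
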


From Theorem~\ref {thm:finite} and Corollary~\ref{corollary:Toda} we  reach our main conclusion: 
\begin {corollary}\label {cor:finite}
Let $ f:\Tor\rightarrow S ^ {2n}_1 $ be a superconformal harmonic map with globally defined harmonic sequence $\{f_1, \ldots , f_n\} $. Then $ f $ has a lift $\psi:\Tor\rightarrow SO (2n, 1)/T $ of finite type.
\end {corollary}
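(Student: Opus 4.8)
The plan is to simply chain together the three results already established in this section, since Corollary~\ref{cor:finite} is almost a formal consequence of them. First I would invoke Corollary~\ref{corollary:Toda}: given a superconformal harmonic map $f:\Tor\rightarrow S^{2n}_1$ with globally defined harmonic sequence $\{f_1,\ldots,f_n\}$, we lift $f$ to a doubly periodic map $\C\rightarrow S^{2n}_1$ with period lattice $\Lambda$ such that $\Tor=\C/\Lambda$. The double periodicity guarantees, by the last statement of Corollary~\ref{corollary:Toda}, that the root-space coefficients satisfy the required constancy condition $c_0c_1^2\cdots c_{n-1}^2c_n=\text{const}$, so we obtain a lift $\psi:\C\rightarrow SO(2n,1)/T$ possessing a Toda frame $F:\C\rightarrow SO(2n,1)$ with respect to an appropriately normalised cyclic element $W\in\g^\sigma_1$, and moreover $\psi$ (and $F$) descend to $\Tor$, i.e.\ are periodic with respect to $\Lambda$.

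Next I would check the hypotheses of Theorem~\ref{thm:finite} are met. The group $G=SO(2n,1)$ is a simple real Lie group, $T$ is the Cartan subgroup fixed earlier in the section, and it was established above (via \cite[Proposition 3.1 and Theorem 3.2]{CT:11}) that $SO(2n,1)/T$ is preserved by the Coxeter automorphism $\sigma$ associated to our choice of simple roots. Since $\psi:\Tor\rightarrow SO(2n,1)/T$ has a Toda frame $F:\Tor\rightarrow SO(2n,1)$, Theorem~\ref{thm:finite} applies directly and yields that $\psi$ is of finite type, which is exactly the assertion of Corollary~\ref{cor:finite}.

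The only genuine content beyond this bookkeeping is making sure the passage between $f$ on $\Tor$ and its universal cover $\C$ is handled carefully: $f$ is given on the torus, but Theorem~\ref{thm:cyclicprimitive} and Corollary~\ref{corollary:Toda} are phrased for maps from $\C$, so I would be explicit that we pull $f$ back to $\C$, apply the results there, and then note that the resulting $\psi$ and Toda frame are $\Lambda$-periodic so that everything pushes back down to $\Tor$. I do not expect any real obstacle here; the substantive work has all been done in Theorem~\ref{thm:cyclicprimitive}, the reality analysis feeding Corollary~\ref{corollary:Toda}, and the finite-type theorem of \cite{CT:11}. If anything, the one point requiring a half-sentence of care is that the cyclic element $W$ must be chosen normalised with respect to the frame $\tilde F$ produced in Theorem~\ref{thm:cyclicprimitive}, but this is precisely the normalisation hypothesis built into the statement of Corollary~\ref{corollary:Toda}, so it is automatic.

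\begin{proof}
Pulling back to the universal cover we may regard $f$ as a doubly periodic superconformal harmonic map $\C\rightarrow S^{2n}_1$ with period lattice $\Lambda$, where $\Tor=\C/\Lambda$, and by hypothesis the elements $f_1,\ldots,f_n$ of its harmonic sequence are everywhere defined. By Corollary~\ref{corollary:Toda} the double periodicity forces $c_0c_1^2c_2^2\cdots c_{n-1}^2c_n$ to be constant, so $f$ admits a lift $\psi:\C\rightarrow SO(2n,1)/T$ possessing a Toda frame $F:\C\rightarrow SO(2n,1)$ with respect to any appropriately normalised cyclic element $W\in\g^\sigma_1$; moreover $\psi$ and $F$ are periodic with respect to $\Lambda$ and hence descend to maps $\psi:\Tor\rightarrow SO(2n,1)/T$ and $F:\Tor\rightarrow SO(2n,1)$. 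Now $G=SO(2n,1)$ is a simple real Lie group, $T$ its Cartan subgroup fixed above, and $SO(2n,1)/T$ is preserved by the Coxeter automorphism $\sigma$, so Theorem~\ref{thm:finite} applies to $\psi$ and its Toda frame $F$ and shows that $\psi$ is of finite type.
\end{proof}
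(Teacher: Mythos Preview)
Your proposal is correct and matches the paper's approach exactly: the paper presents Corollary~\ref{cor:finite} as an immediate consequence of Corollary~\ref{corollary:Toda} and Theorem~\ref{thm:finite}, with no further argument given. Your write-up just makes explicit the bookkeeping (passage to the universal cover, periodicity of $\psi$ and the Toda frame, verification of the hypotheses of Theorem~\ref{thm:finite}) that the paper leaves implicit.
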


 \section {Applications to Willmore  surfaces  in $ S ^ 3 $}\label{willmore}
In this section we explain the implications of our results to Willmore tori in $ S ^ 3 $.
Recall that an immersed surface $\upsilon:\Sigma\rightarrow S ^ 3 $ is \emph{Willmore} if it is critical for the Willmore functional
\[
\mathcal W =\int_{\Sigma} H ^ 2 \; dA,
\]
where  $ H $ denotes the mean curvature of $\upsilon $ and $ dA $ the area form. The relationship between these surfaces and Corollary~\ref{cor:finite} is provided by the conformal Gauss map, which allows us to study Willmore surfaces in $S^3$ without umbilic points in terms of minimal surfaces in $S^4_1$. The conformal Gauss map $f $  of a Willmore surface is either isotropic or superconformal, and in \cite{Bryant:84} Bryant constructed all isotropic examples using a Weierstrass-type representation. Hence the remaining case is when $f $ is superconformal, and we shall see that all Willmore tori in $ S ^ 3 $ without umbilic points and with superconformal Gauss maps are of finite type.

Geometrically, the conformal Gauss map of an immersion $\upsilon:\Sigma\rightarrow S ^ 3 $ associates to each point on the surface $\Sigma $ its central sphere, that is the oriented 2-sphere in $ S ^ 3 $ with the same normal vector and mean curvature. 
A sphere in $S^3$ is the intersection of $S^3$ and a hyperplane in $\mathbb{R}^4$;
\[
S^3 \cap \{x_1,x_2,x_3,x_4 : a_1x_1+a_2x_2+a_3x_3+a_4x_4-b=0\}.
\]
For this hyperplane to intersect with $S^3$ at more than one point requires $a_1^2+a_2^2+a_3^2+a_4^2-b^2>0$ and hence we can scale $(a_1, a_2, a_3, a_4, b)$ so that $a_1^2+a_2^2+a_3^2+a_4^2-b^2=1$. Thus each sphere can be identified with two antipodal points $\pm (a_1, a_2, a_3, a_4, b)\in S ^ 4_1 $; choosing an orientation for the sphere gives a well-defined element of $ S ^ 4_1 $ and so we see that the   space of oriented 2-spheres in $S^3$ is naturally identified with $S^4_1$. The conformal Gauss map $f:\Sigma\rightarrow S ^ 4_1 $ is given explicitly by
\[
f (z) =H (z)\cdot \Upsilon (z) + N (z)
\]
where $\Upsilon (z) = (\upsilon (z), 1)$, $ N = (n,0)$.
As observed in \cite{Bryant:84} it is weakly conformal and an immersion away from the umbilic points of $\upsilon $. A simple computation shows that the area form on $\Sigma $ induced by $f $ is given by $(H ^ 2 - K) dA $, where $ K $ is the Gaussian curvature of $\upsilon $. Using the Gauss-Bonnet theorem, we may replace the Willmore functional by the conformally invariant functional $\int_{\Sigma} (H ^ 2 - K) dA $. 
We see then that $\upsilon:\Sigma\rightarrow S ^ 3 $ is a Willmore immersion without umbilic points if and only if $f:\Sigma\rightarrow S ^ 4_1 $ is a minimal immersion, or equivalently is conformal and harmonic.

Let $f:\C\rightarrow S ^ 4_1 $ be a minimal immersion (not necessarily superconformal).
We call a frame $ F:\C\rightarrow SO (4, 1) $ for $f $ \emph{adapted} if $ Fe_1 =f $ whilst $ Fe_2, Fe_3 $ span a tangent plane to $f $. From Theorem~\ref{thm:cyclicprimitive} it easily follows that
\begin {corollary}
An immersion $f:\C\rightarrow S ^ 4_1 $ is conformal and harmonic if and only if it possesses an adapted primitive frame $ F $.
\end {corollary}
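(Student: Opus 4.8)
The plan is to show that the two conditions---``conformal and harmonic immersion'' and ``possession of an adapted primitive frame''---are equivalent by passing through the dichotomy that a minimal immersion $f:\C\to S^4_1$ is either isotropic or superconformal, together with Theorem~\ref{thm:cyclicprimitive} and the Remark following it. First I would recall that for $n=2$ the isotropy order $r$ of a non-constant harmonic map into $S^4_1$ is either $r=2$ (isotropic) or $r=1$ (superconformal), since $r$ can only take values in $\{0,1,2\}$ and an immersion with $r\geq 1$ is weakly conformal; moreover $r=0$ would contradict conformality. So a conformal harmonic immersion $f:\C\to S^4_1$ is automatically either isotropic or superconformal, and in either case its harmonic sequence $\{f_0,f_1,f_2\}$ is either identically defined or, by \cite[Lemma 3.1]{Hulett:05}, defined off a discrete set.

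For the forward direction, suppose $f$ is a conformal harmonic immersion. If $f$ is superconformal with everywhere-defined $\{f_1,f_2\}$, apply Theorem~\ref{thm:cyclicprimitive} directly: one obtains a cyclic primitive frame $F$, and the defining formula $f_1=c_1F(e_2+ie_3)$ shows that $Fe_2,Fe_3$ span the real tangent plane $\Span_\R\{f_1+\overline{f_1},\,i(\overline{f_1}-f_1)\}$, which is exactly the tangent plane to $f$ (since $f_1$ is the $\partial_z f$-direction after projecting off $f$), so $F$ is adapted. If instead $f$ is isotropic, invoke the Remark following Theorem~\ref{thm:cyclicprimitive}: dropping the cyclic condition, harmonic maps with primitive lifts are precisely the isotropic or superconformal ones with everywhere-defined harmonic sequence, so $f$ still has a primitive frame $F$ built from the recipe \eqref{eq:realframe}, and the same identification of $Fe_2,Fe_3$ shows it is adapted. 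The one point to verify carefully here is that in the isotropic case the harmonic sequence is genuinely everywhere defined for an \emph{immersion}---this uses that an isotropic map of isotropy order $2$ into $S^4_1$ is space-like with $f_1$ space-like and non-vanishing, which follows from the isotropy relations $\langle f_1,f_1\rangle=0$, $\langle\partial_z f,\partial_z f\rangle=0$ together with the immersion hypothesis $\partial_z f\neq 0$, forcing $\|f_1\|^2>0$.

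For the converse, suppose $f$ possesses an adapted primitive frame $F$. By the Remark, the existence of \emph{any} primitive frame already forces $f$ to be harmonic (primitive maps are harmonic) and to be either isotropic or superconformal with everywhere-defined harmonic sequence; in particular $f$ is weakly conformal. It remains to upgrade ``weakly conformal'' to ``conformal immersion'', i.e. to rule out branch points. This is where the adaptedness hypothesis does real work: since $Fe_2,Fe_3$ span a tangent plane to $f$ at every point and $F$ takes values in $SO(4,1)$, the vectors $Fe_2,Fe_3$ are everywhere a non-degenerate (space-like, orthonormal) frame, so $df$ has rank $2$ everywhere and $f$ is an immersion. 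Combined with weak conformality this gives that $f$ is a conformal harmonic immersion.

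\textbf{Main obstacle.} The substantive point, rather than the frame bookkeeping, is handling the isotropic case cleanly: one must confirm that Bryant's isotropic conformal Gauss maps do fall under the ``primitive lift'' umbrella of the Remark (everywhere-defined harmonic sequence for an immersion), and that the adaptedness of $F$ is compatible with---indeed forced by---the formulas \eqref{harm} and \eqref{eq:realframe} in both the $r=1$ and $r=2$ cases simultaneously, so that the single statement covers both. I would organize the write-up so that the superconformal case quotes Theorem~\ref{thm:cyclicprimitive} verbatim and the isotropic case quotes the Remark, with a short unified paragraph checking $\Span_\R\{Fe_2,Fe_3\}=\operatorname{im} df$ that applies to both.
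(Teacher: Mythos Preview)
Your argument is essentially correct and follows the same route as the paper: the forward direction comes straight from the frame construction \eqref{eq:realframe} in the proof of Theorem~\ref{thm:cyclicprimitive} (for $n=2$ only $f_1=f_z$ is needed, and this is nowhere vanishing and spacelike because $f$ is a spacelike immersion), and the converse uses that primitive implies harmonic together with the Remark to get isotropic-or-superconformal, hence conformal.

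One correction: in the converse you do not need to ``upgrade weakly conformal to conformal immersion'' or rule out branch points. The statement is about an immersion $f$ from the outset, so once you have weak conformality you are done; the adaptedness hypothesis is not doing the work you attribute to it there. The paper in fact gives an even shorter alternative for conformality in the converse: reading off the first column of \eqref{eq:matrix} gives $f_z = c_1\,F(e_2+ie_3)$, whence $\langle f_z,f_z\rangle = 0$ directly, with no appeal to the isotropic/superconformal dichotomy at all. Your case-split in the forward direction is also more elaborate than necessary---since for $n=2$ the construction only uses $f_1=f_z$, the isotropic and superconformal cases are handled uniformly and the ``everywhere-defined $f_2$'' worry never arises.
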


\begin{proof}
The forwards implication is contained in the proof of Theorem~\ref{thm:cyclicprimitive}. Suppose conversely that $f $ has a primitive adapted frame $ F $. Then $ f $ is harmonic because it is the projection of a primitive map. It is conformal because having a primitive frame forces it to be either isotropic or superconformal. Alternatively, from \eqref {eq:matrix} we see that
$ f_z = F^ {- 1} F_ze_1\in\C (e_2 - ie_3) $
and hence see directly that $f $ is conformal.
\end {proof}
%
Observe that the frame $ F $ constructed here is the same as that described in section ~\ref{primitive}, but since no reference to the harmonic sequence was needed, we have $ f_1 = f_z $ and then last two columns of $ F $ are chosen so that $ F $ lies in $ SO (4, 1) $.

A straightforward computation shows that for any minimal $ f $, the quantity $ \langle f_{zz}, f_{zz} \rangle $ is holomorphic. Hence if we further assume that $ f $ is doubly periodic this quantity is constant. The curvature ellipse of $ f $ at $ z\in T $ is the image of the unit circle under the second fundamental form and is a circle precisely when $ \langle f_{zz}, f_{zz} \rangle = 0 $ which occurs if and only if $ f $ is isotropic. Alternatively the first ellipse of curvature being a non-circular ellipse corresponds  to $ f $ being superconformal, and the last two columns of $ F $ are then determined by the principal directions of this ellipse. A minimal $ f: T ^ 2\rightarrow S ^ 4_1 $ is thus either isotropic or superconformal.

From  the expression
\[
\langle f_{zz}, f_{zz}  \rangle = 4c_1 ^ 2c_2c_0
\]
obtained in Theorem~\ref{thm:cyclicprimitive} we see that the frame $ F $ constructed above is cyclic precisely when $ f $ is said to conformal.

Corollary~\ref{cor:finite}, applied to the case $ n = 2 $, now shows that
\begin {corollary}
If $\upsilon: T ^ 2\rightarrow S ^ 3 $ is a non-isotropic Willmore immersion without umbilic points then its conformal Gauss map has a primitive lift of finite type.
\end {corollary}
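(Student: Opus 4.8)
The plan is to match the hypotheses of Corollary~\ref{cor:finite} in the case $n = 2$ to the data of a non-isotropic Willmore immersion without umbilic points, and then invoke that corollary. As recalled above, $\upsilon : T^2 \to S^3$ is Willmore without umbilic points precisely when its conformal Gauss map $f : T^2 \to S^4_1$ is a minimal immersion, i.e.\ conformal and harmonic; and since $\upsilon$ is non-isotropic, $f$ is not isotropic, so by the dichotomy for minimal immersions of a $2$-torus into $S^4_1$ established above, $f$ is superconformal.

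The only thing left to verify is that the harmonic sequence $\{f_1, f_2\}$ of $f$ is defined on all of $T^2$. As already observed, $\|f\|^2 = 1$ forces $\langle \partial_z f, \bar f\rangle = 0$, hence $f_1 = \partial_z f = f_z$, which is nowhere zero because $f$ is an immersion. Being superconformal, $f$ has isotropy order $r = n - 1 = 1 \geq 1$ and is therefore space-like, so together with the immersion property $\|f_1\|^2 = \langle f_z, \bar{f_z}\rangle > 0$ at every point of $T^2$ --- equivalently, the conformal factor $H^2 - K$ of the metric induced by $f$ is everywhere positive, which is exactly the absence of umbilic points of $\upsilon$. Consequently $f_2$ is defined at every point of $T^2$ by \eqref{harm}, so the harmonic sequence $\{f_1, f_2\}$ is globally defined.

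Thus $f : T^2 \to S^4_1$ is a superconformal harmonic map with globally defined harmonic sequence $\{f_1, f_2\}$, and Corollary~\ref{cor:finite} with $n = 2$ produces a lift $\psi : T^2 \to SO(4,1)/T$ of finite type. Tracing the chain Corollary~\ref{corollary:Toda}$\,\to\,$Theorem~\ref{theorem:Toda} underlying Corollary~\ref{cor:finite}, this $\psi$ carries a Toda frame and is in particular cyclic primitive; hence it is the asserted primitive lift of the conformal Gauss map, of finite type.

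This is a short deduction: all the analytic substance lives in Sections~2--3 and in the standard properties of the conformal Gauss map recalled in this section. The one step that genuinely needs attention is the verification in the second paragraph that the harmonic sequence extends over the whole torus, and this is precisely where the no-umbilic-points hypothesis enters --- through the positivity of $H^2 - K$, equivalently of $\|f_z\|^2$ --- guaranteeing that $f_2$ is everywhere defined so that the finite-type result of Corollary~\ref{cor:finite} applies.
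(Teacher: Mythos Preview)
Your argument is correct and follows the paper's own route: reduce to Corollary~\ref{cor:finite} with $n=2$ by using that the conformal Gauss map of a Willmore immersion without umbilic points is a minimal immersion into $S^4_1$, invoke the isotropic/superconformal dichotomy for minimal tori there, and then check that the harmonic sequence $\{f_1,f_2\}$ is globally defined. The paper leaves that last verification implicit (it notes $f_1=f_z$ and builds an adapted frame directly), whereas you spell out that $\|f_z\|^2>0$ follows from the immersion and space-like conditions; this is the only place the no-umbilic-points hypothesis is used, exactly as you emphasise.
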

 Hence we may recover $\upsilon $ by integrating a pair of commuting vector fields on a finite dimensional Lie algebra. \bibliographystyle{plain}

\def\cprime{$'$}

\end {document}